\newcommand\R{{\mathbb{R}}}
\newcommand\Z{{\mathbf{Z}}}
\newcommand\eps{{\varepsilon}}
\newcommand\BZ{{\mathbf Z}}
\theoremstyle{plain}
 \newtheorem{theorem}{Theorem}[section]
 \newtheorem{question}[theorem]{Question}
 \newtheorem{proposition}[theorem]{Proposition}
 \newtheorem{corollary}[theorem]{Corollary}
\newtheorem{remark}[theorem]{Remark}
\theoremstyle{definition}
\begin{document}

\title[Sum-free sets in  groups]{Sum-free sets in  groups: a survey}

\author{Terence Tao}
\address{Department of Mathematics, UCLA, Los Angeles CA 90095-1555}
\email{tao@@math.ucla.edu}
\thanks{T. Tao is supported by NSF grant DMS-0649473 and by a Simons Investigator Award.}

\author{Van Vu}
\thanks{V. Vu is supported by research grants DMS-0901216 and AFOSAR-FA-9550-09-1-0167.}
\address{Department of Mathematics, Yale University, New Haven, CT 06520, USA}
\email{van.vu@yale.edu}

\begin{abstract} We discuss several questions concerning sum-free sets in groups, raised by 
Erd\H{o}s in his survey  {\it ``Extremal problems in number theory'' } (Proceedings of the Symp. Pure Math. VIII AMS)  published in  1965. 

Among other things, we give a characterization for large sets $A$ in an abelian group $G$  which do not contain a subset $B$ of  fixed size $k$ such that the sum of any two different elements of $B$ do not belong to $A$ (in other words, $B$ is sum-free with respect to 
$A$). Erd\H{o}s, in the above mentioned survey, conjectured that  if $|A|$ is sufficiently large compared to $k$, then $A$ contains two elements that add up to zero. This is known to be true for $k \leq 3$. We give counterexamples for 
all $k \ge 4$.  On the other hand, using the new  characterization result, we are able to prove  a positive result  in the case when $|G|$ is not divisible by small primes.
\end{abstract}

\maketitle

\setcounter{tocdepth}{2}

\centerline{\emph{To Ron Graham for his $80^{\operatorname{th}}$ birthday}}
\section {Sum-free set problems } 

In this note, we discuss a few  questions of Erd\H{o}s, which appeared  in his survey {\it ``Extremal problems in number theory''} \cite{Erd65}, published almost half a century ago. 
The survey started with  his study with Moser on Littlewood-Offord type anti-concentration results. However, as he pointed out, this study led to several 
questions concerning  sum-free sets, which are interesting in their own right. 

In what follows, for a finite set $A$ of an additive group $G$, we set $$2A := A+A  := \{ a_1 + a_2 | a_i \in A \}$$ and $$ 2^{\ast }A = A   \overset{\ast}+ A:= \{ a_1 + a_2| x_i \in A , a_1 \neq a_2 \}. $$
We first consider  the case when $G = \R$, the set of real numbers.

 For a set  $A $ of  real numbers, let $f(A)$ be the size of the  largest subset $B$ of $A$ 
 such that $(B + B) \cap B = \emptyset $.   Define $f(n):= \min_{A, |A| = n }  f(A)$. 

Using a simple, but beautiful, probabilistic argument, Erd\H{o}s \cite{Erd65} proved that $f(n) \ge \frac{n}{3} $. 
Let $T$ be a large number and $I_j$ be the collection of $\alpha \in [0,T]$ such that  $a_j \alpha \,\, (\operatorname{mod} \,\,1)  \in (1/3,2/3)$. It is easy to see 
that  there is a number $C$ which may depend on the $a_j$'s, but is independent of $T$ such that 

$$\left| \mu (I_j) - \frac{T}{3} \right|  \le C, $$ for all $1 \le j \le n$. By the pigeon hole principle,  there is a number $\alpha$ such that there are at least $n/3$ indices $j$ satisfying 
$a_j \alpha \,\, (\operatorname{mod} \,\,1)  \in (1/3,2/3)$. These form the desired set $B$.

It is  surprisingly hard to 
improve upon this bound. Alon and Kleitman \cite{AK} modified  Erd\H{o}s'  argument slightly to have 
$f(n) \ge  \frac{n+1}{3}$. Bourgain \cite{B},  using a much more sophisticated  and entirely different  argument, proved  $f(n) \ge \frac{n+2}{3} $.  This is still the best 
lower bound to date.

From above, a   recent breakthrough by  Eberhard, Green, and Manners \cite{Green}  showed that the constant $1/3$ cannot be improved,  namely $f(n) \le (\frac{1}{3} +o(1))  n$ (see  Eberhard's paper \cite{Eber} for a generalization). It is a fascinating open  problem to determine whether 
$f(n) -\frac{n}{3} $ tends to infinity with $n$. 

The next sum-free problem  Erd\H{o}s discussed in  his  survey concerns a stronger notion of sum-freeness.  Given a set $A$, we denote by $\phi (A)$ 
 the size of the largest subset $B$ such that  $(B \overset{\ast}+  B) \cap A = \emptyset $; following \cite{Ruzsa}, we say that $B$ is \emph{sum-avoiding} in $A$ if this occurs. Similarly, we define $\phi(n) := \min_ {A, |A| =n } \phi (A)$. 
(Notice that the problem is easy if one considers $B+B$ instead of $B \overset{\ast}+  B$, as one can take $A =\{1, 2, \dots, 2^{n-1} \}$, which shows $\phi (n) =1$.)

The  problem of determining the order of magnitude of $\phi (n)$ is still wide open, despite efforts from many researchers through a long period. 
In \cite{Erd65}, Erd\H{o}s  mentioned a result of Selfridge  that showed  $\phi (n) \le n/4$  and suggested that it probably has order $o(n)$.
Choi \cite{choi}, using sieve methods, 
proved that $\phi (n) \le  n^{2/5 + o(1) }$. He also noted that in this problem, it suffices to consider the special case when $A$ is a set of integers, which, in modern term, is a corollary
of Freiman's isomorphism. Choi's result was slightly improved by Baltz, Schoen, and Srivastav \cite{baltz}, who showed that $\phi (n)  = O(n^{2/5} \log n )$ .
 In 2005, Ruzsa \cite{Ruzsa} obtained a  more significant  improvement, proving that  $\phi (n) \le \exp (O(\sqrt {\log n }))$.

From below, Choi showed that $\phi (n) \ge \log_2 n$ and Ruzsa improved it slightly to $2  \log_3 n$, which seems to be the limit of greedy constructions (see \cite{Ruzsa, ssv} for more details). 
One can prove a less precise  bound $\phi (n) \ge \ln n +O(1)   $ using Tur\'an theorem from extremal graph theory.  Without loss of generality, one can assume that $n/2$ elements of $A$ are positive, say 
$0 < a_1 \le \dots \le a_{n/2} $.  Define a graph on this set by  connecting $a_i$ with $a_j$ if 
$a_i +a_ j \in A$. It is clear that the degree of $a_j$ is at most $d_j:= n/2 -j$.  On the other hand, Tur\'an's theorem asserts that 
the independence number of a graph is at least the sum of the reciprocal of the positive degrees. This implies that we have  an independent set $B$ of size at least 
$\sum_{j=1}^{n/2} \frac{1}{j} = \ln n +O(1) $. But, by the definition of the graph, $ 2^{\ast} B  \cap  A = \emptyset$.

For sometime, it was speculated that $\Theta (\log n)$ is the right order of magnitude of $\phi (n)$. However, about ten years ago, 
Sudakov, Szemer\'edi and Vu \cite{ssv} managed to push beyond $\log n$ by showing $\frac{\phi  (n)}{\log n} $ tends to infinity. Quantitatively, they proved that $\frac{\phi (n) }{ \log n }  \ge     \log ^{(5)} n $, where $\log^{(5)} n$ is the fifth iterated logarithm of $n$. 
More recently, Dousse \cite{JD} improved the lower bound to $  (\log^{(3) } n)^{1/32772-o(1) }$, and Shao \cite{Shao} improved it further to $(\log ^{(2) } n) ^{1/2-o(1) }$. These works are elaborate and    rely on some powerful tools in additive combinatorics. 
Nevertherless, the gap between the upper bound and the lower bound remains significant.

\section {Sum-free sets in Groups} 

Now we discuss both problems in a more general setting when  $A$ is a subset of an abelian group $G$. In what follows, we focus on the case when $G$ is abelian with finite rank. 
By the characterization of abelian groups, $G$ has the form 

\begin{equation} \label{representation}  {\bf Z} ^r \oplus \BZ/q_1 \BZ \oplus \cdots \oplus \BZ/q_m \BZ  \end{equation} 
where $\BZ/N\BZ$ is the cyclic group of order $N$ and $q_i$ are powers of (not necessarily distinct) prime numbers.  

The existence of torsion changes the nature of both problems significantly.  For the first problem, Alon  and Kleitman \cite{AK}
 showed that every subset $A$ of a group $G$ contains 
a subset $B$ such that $2B \cap A = \emptyset $ and $|B| > \frac{2}{7} |A|$.  This bound is sharp in the strong sense that $\frac{2}{7}  |A|$ cannot be replaced even by $\frac{2}{7} |A|+1$. 
To see this, one can use a  result of Rhemtulla and Street \cite{RS}, which asserts that 
if $G= (\BZ/p \BZ)^s $, where $p=3k+1$ is a prime, then the maximum sum free subset of $G$ has size $k p^{s-1}$. Taking $k=2$ and letting  $s$  tend  to infinity, we obtain the claim.

The obvious open question here is what happens if we focus on a specific group. Using the notation of the previous section, we set 

$$f_G (n) := \min_{A \subset G, |A|= n } f(A). $$

The value of   $f_G(n)$ varies with  $G$. For instance, if we take $G= {\bf Z}_p$, for $p$ much larger than $n$, then Erd\H{o}s's argument for the real case also works here, giving
$f_G(n) \ge \frac{n}{3} $.  Define $ h(G) := \frac{f_G (n) }{n}  $; the solution to the problem is given by the following theorem:

\begin{theorem} \label{GR}
\begin{itemize}
\item[(i)] If $|G|$ is divisible by a prime  $p \equiv 2 (mod \,\, 3) $ then $h(G) = \frac{1}{3} + \frac{1}{3q}$, where $q$ is the smallest such prime.

\item[(ii)] If $|G|$ is not divisible by any prime  $p \equiv 2 (mod \,\, 3) $ and $3 | |G|$, then $h(G) = \frac{1}{3} $. 

\item[(iii)] If $|G|$ is only divisible by primes $p \equiv 1 (mod \,\, 3) $, then $h(G) = \frac{1}{3} -\frac{1}{3m}$, where $m$ is the largest order of any element of $G$. 
\end{itemize}
\end{theorem}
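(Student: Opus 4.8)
The quantity recorded by the theorem is the maximal density $\mu(G)$ of a sum-free subset of $G$: taking $A=G$ gives $f(A)/|A|=\mu(G)$, so the task is to determine the largest size of a set $S\subseteq G$ with $(S+S)\cap S=\emptyset$. The plan is to prove matching lower and upper bounds for $|S|/|G|$ in each of the three regimes.

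For the lower bounds I would exhibit explicit sum-free sets by pulling back intervals from cyclic quotients, using two elementary facts: (a) if $\pi:G\to H$ is a surjective homomorphism and $S_0\subseteq H$ is sum-free, then $\pi^{-1}(S_0)$ is sum-free of the same density; and (b) a finite abelian $G$ admits $\BZ/d\BZ$ as a quotient whenever $d$ divides the exponent of $G$, in particular whenever $d$ is a prime dividing $|G|$ or $d$ is the exponent itself. In case (i), take $d=q$ with $q=3k+2$ and $S_0=\{k+1,\dots,2k+1\}$, sum-free of density $(q+1)/(3q)=\tfrac13+\tfrac{1}{3q}$. In case (ii) take $d=3$, $S_0=\{1\}$, density $\tfrac13$. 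In case (iii), where the exponent $m=3k+1$ satisfies $m\equiv 1\pmod 3$, take $d=m$ and $S_0=\{k+1,\dots,2k\}$, sum-free of density $\tfrac{m-1}{3m}=\tfrac13-\tfrac{1}{3m}$. A one-line congruence check confirms each $S_0$ is sum-free, and pulling back yields sets of the claimed densities.

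The upper bound is the substance of the theorem, and I would approach it by Fourier analysis on $G$ together with a reduction to cyclic groups. Writing $\alpha=|S|/|G|$ and expanding the sum-free identity $\sum_{x\in S}(\mathbf{1}_S*\mathbf{1}_S)(x)=0$ via Parseval gives $\sum_{\gamma}|\widehat{\mathbf{1}_S}(\gamma)|^2\operatorname{Re}\widehat{\mathbf{1}_S}(\gamma)=0$; separating the trivial character, whose contribution is $|S|^3$, forces some nontrivial character $\gamma$ to satisfy $\operatorname{Re}\widehat{\mathbf{1}_S}(\gamma)<0$ together with $|\widehat{\mathbf{1}_S}(\gamma)|\ge\tfrac{\alpha^2}{1-\alpha}|G|$. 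A large Fourier coefficient at a character $\gamma$ of order $d$ means $S$ concentrates on the level sets of $\gamma$, i.e. essentially on a few cosets of $\ker\gamma$, so that $\alpha$ is controlled by the density of a sum-free configuration in the cyclic group $\BZ/d\BZ$, where the prime factors of $d$ are among those of $|G|$. One then solves the extremal problem in $\BZ/d\BZ$, where intervals are optimal and the maximal density is $\tfrac13+\tfrac{1}{3d}$, $\tfrac13$, or $\tfrac13-\tfrac{1}{3d}$ according to $d\bmod 3$, and optimizes over the admissible $d$ to recover exactly the three stated constants.

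The main obstacle is this upper bound, and within it two points need real work. First, converting "one large Fourier coefficient" into honest combinatorial structure—that $S$ is genuinely, not merely on average, trapped inside the pullback of an interval from $\BZ/d\BZ$—requires an iteration or a stability argument rather than the single Parseval step above. Second, case (iii) is the most delicate: the extremal density sits just below $\tfrac13$, so one cannot afford any slack, and one must show that it is the full exponent $m$, rather than some smaller cyclic quotient, that governs the bound. This amounts to analyzing sum-free sets in $\BZ/d\BZ$ for all $d$ dividing the exponent and checking that, with all prime factors $\equiv 1\pmod 3$, the density $\tfrac13-\tfrac{1}{3d}$ is increasing in $d$ and hence maximized at $d=m$. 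I expect the bookkeeping in this cyclic-group optimization, together with the quantitative stability needed to exclude sets structured with respect to two incompatible characters at once, to be the hardest part.
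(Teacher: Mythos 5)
First, a point of reference: the paper does not prove Theorem \ref{GR} at all --- it is quoted as a known result, with (i) and (ii) attributed to Diananda--Yap \cite{DY} and (iii) to Green--Ruzsa \cite{GR}. So your proposal can only be measured against those cited proofs. Your lower-bound half is complete and correct as written: the pullback lemma, the existence of the cyclic quotients $\BZ/q\BZ$ and $\BZ/m\BZ$, and the three interval constructions all check out, and the densities match the stated constants exactly.

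The upper bound is where the theorem actually lives, and there your sketch has a genuine gap that you partly acknowledge but underestimate. The Parseval step correctly produces a nontrivial character with $\operatorname{Re}\widehat{\mathbf{1}_S}(\gamma) \le -\frac{\alpha^2}{1-\alpha}|G|$, but passing from one large negative Fourier coefficient to the \emph{exact} extremal constants is not a matter of ``an iteration or a stability argument'': any approximate concentration statement loses additive constants, while the answers here are sharp to within a single element (e.g.\ $(q+1)/3q$ versus $q/3q$). The standard route for (i) and (ii) avoids Fourier analysis entirely: since $S+S \subseteq G \setminus S$, Kneser's theorem gives $|G|-|S| \ge |S+S| \ge 2|S| - |H|$ for the stabilizer $H$ of $S+S$, hence $3|S| \le |G| + |H|$, and one finishes by passing to the quotient $G/H$; this is exact with no stability analysis needed. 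For (iii) the constant sits strictly below $1/3$, and neither Kneser nor a single Fourier coefficient suffices; making the ``trapped inside the pullback of an interval from $\BZ/m\BZ$'' step rigorous at this precision is essentially the whole content of the Green--Ruzsa paper, not a footnote to your outline. You have correctly located the hard part, but the proposal as it stands proves only the easy half of the theorem.
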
 

Parts (i) and (ii) of this theorem were established by Diananada and Yap \cite{DY}; the remaining case (iii) was obtained by Green and Ruzsa \cite{GR}, following some partial results by Yap \cite{Yap1,Yap2} and Rhemtulla-Street \cite{RS2}.

\vskip2mm

The  second problem is more delicate, as we first need to  find the right question to ask. One can follow the 
above discussion and define 

$$\phi_G (n) := \min_{A \subset G, |A|= n } \phi (A). $$

The subtlety in this definition is that $\phi $ is not monotone in $n$. Furthermore, it can be exactly 1  often. If $n$ happens to be the size of a subgroup $H$ of 
$G$, then just take $A$ to be $H$ and we have $\phi (A)=1$. It thus shows that one cannot expect any universal bound like Alon-Kleitman's. On the other hand, it is not too hard to 
show that being a subgroup is essentially the only reason  $\phi (A)=1$.

\begin{proposition}[Characterisation of $\phi(A)=1$]\label{easy} Let $A$ be a finite subset of an additive group $G$.  Then $\phi(A)=1$ if and only if one of the following is true:
\begin{itemize}
\item $A=H$, where $H \leq G$ is a subgroup of $G$.
\item $A=H \backslash \{0\}$, where $H \leq G$ is a $2$-torsion subgroup of $G$ (thus $2x=0$ for all $x \in H$).
\item $A = \{b\}$ for some $b \in G$.
\item $A = \{b,0\}$ for some $b \in G$.
\item $A = \{b,0,-b\}$ for some $b \in G$.
\end{itemize}
\end{proposition}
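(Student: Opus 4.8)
The plan is to first translate $\phi(A)=1$ into a closure condition. Since a two-element set $B=\{a,b\}$ with $a\ne b$ has $B \overset{\ast}+ B=\{a+b\}$, and since sum-avoidance passes to subsets, one has $\phi(A)=1$ if and only if $A\ne\emptyset$ and every pair of distinct elements sums back into $A$:
\[
(\star)\qquad a,b\in A,\ a\ne b\ \Longrightarrow\ a+b\in A.
\]
Checking that each of the five families satisfies $(\star)$ is routine; the only point worth flagging is that for $A=H\setminus\{0\}$ with $H$ a $2$-torsion group, two distinct elements can never sum to $0$ (that would force $a=-b=b$), which is exactly why deleting $0$ preserves $(\star)$. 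The content is the converse: a finite $A$ obeying $(\star)$ is one of the five types.

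Next I would reduce to finite groups by a torsion projection. Let $T\le G$ be the torsion subgroup and $\pi\colon G\to G/T$, so $G/T$ is torsion-free, hence orderable. Projecting $(\star)$ shows $\pi(A)$ again satisfies $(\star)$ in $G/T$, and a max/min argument in an ordered group forces any finite $(\star)$-set there to be $\{v\}$, $\{0,v\}$, or $\{-v,0,v\}$. Lifting: if $\pi(A)=\{v\}$ with $v\ne 0$ then two distinct elements of $A$ would have $\pi$-image $2v\ne v$, so $|A|=1$; the cases $\{0,v\}$ and $\{-v,0,v\}$ collapse each nonzero fiber to a point and force $A=\{0,w\}$ resp.\ $A=\{-w,0,w\}$; and $\pi(A)=\{0\}$ means precisely that $A$ lies in the (finite) torsion subgroup. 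This leaves the case $G$ finite abelian.

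The finite case is the crux, and I would build it on a cyclic lemma. Lifting $\mathbb{Z}/d$ to $\{0,1,\dots,d-1\}$ and writing $M=\max A$, $s=d-M$, condition $(\star)$ forces $x\mapsto x-s$ to send $A\setminus\{0,M\}$ into $A$; iterating this strictly decreasing map gives $|A|\le 1$ when $0\notin A$, and shows $A$ is a subgroup, $\{0,b\}$, or $\{0,b,-b\}$ when $0\in A$. To pass to general finite $G$ I split on $0$. If $0\notin A$, then $(\star)$ forbids distinct elements summing to $0$, and applying the cyclic lemma to $A\cap\langle a\rangle$ rules out any $a$ of order $\ge 3$ (either $A\subseteq\langle a\rangle$, contradicting the cyclic bound, or some $b\notin\langle a\rangle$ lets one fill $b+\langle a\rangle\subseteq A$ and eventually $\langle a\rangle\subseteq A$, forcing $0\in A$); hence $A$ is $2$-torsion, $A\cup\{0\}$ is a subgroup $H$, and $A=H\setminus\{0\}$. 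If $0\in A$ and $A$ is none of the three small sets, the same coset-filling shows any $a$ with $2a\notin A$ produces (via some $b\notin\langle a\rangle$) the inclusion $\langle a\rangle\subseteq A$, contradicting $2a\notin A$; so every double lies in $A$, $A$ is additively closed and contains $0$, hence is a subgroup.

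The main obstacle is this coset-filling propagation. The delicate claim is that once a single translate $b+\langle a\rangle$ lies in $A$, repeatedly summing distinct pairs forces the successive cosets $jb+\langle a\rangle$ into $A$ until $j\bar b$ returns to the trivial coset, yielding $\langle a\rangle\subseteq A$. This relies on the fact that, for $a$ of order $\ge 3$, sums of distinct pairs drawn from (two copies of) a coset already sweep out an entire coset; it is exactly here that the order-$\ge 3$ hypothesis — equivalently, the split between genuine subgroups and the $2$-torsion family $H\setminus\{0\}$ — enters. Verifying that this propagation terminates correctly and stitching the cyclic building block into the global statement is where essentially all the work lies; by contrast the ordered-group reduction and the easy direction are immediate.
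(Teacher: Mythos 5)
Your argument is correct in outline and, as far as I can check, all of the compressed steps (the descent for the cyclic lemma, the distinctness bookkeeping in the coset-filling) do go through; but it takes a genuinely different and substantially longer route than the paper. The paper's proof is a short two-way case split driven by doubling: if some nonzero $a \in A$ has $2a \in A$, then $x \mapsto x+a$ is a bijection of $A$ (injective self-map of a finite set), which lets one upgrade the ``distinct sums land in $A$'' hypothesis to full additive closure, so $A$ is a subgroup; otherwise $2a \notin A$ for every nonzero $a$, and one splits on whether some $b \in A$ has $2b \neq 0$ (in which case iterating $x \mapsto x+b$ forces $A \subseteq \langle b\rangle$ and then $A$ is one of $\{b\}$, $\{b,0\}$, $\{b,0,-b\}$) or $A$ is $2$-torsion (giving $H$ or $H\setminus\{0\}$). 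That argument never needs the structure of $G$, the torsion quotient, orderability, or a cyclic classification. By contrast, you first quotient by the torsion subgroup and use an ordered-group max/min argument, then prove a cyclic lemma by integer lifting and descent, and finally propagate cosets of $\langle a\rangle$ through $G/\langle a\rangle$. What your route buys is a more explicit, ``local-to-global'' picture (the cyclic lemma and the ordered-group classification are self-contained facts that could be reused), and it isolates exactly where the order-$\geq 3$ hypothesis matters; what it costs is length and several places where distinctness of summands must be checked by hand (e.g.\ that for $\operatorname{ord}(a)\geq 3$ every target in a coset is a sum of a \emph{distinct} pair, and that your phrase ``contradicting the cyclic bound'' in the $0\notin A$, $A\subseteq\langle a\rangle$ branch should really read ``forces $A=\{a\}$, one of the allowed cases''). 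The paper's translation-bijection trick is the single idea that lets it bypass all of this, and you may want to note it as the shorter path.
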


\begin{proof} It is easy to verify that $\phi(A)=1$ in all of the above five cases.  Now suppose that $\phi(A)=1$; then we have
\begin{equation}\label{sum}
b_1 + b_2 \in A \hbox{ whenever } b_1, b_2 \in A \hbox{ and } b_1 \neq b_2
\end{equation}

Suppose first that there exists a non-zero $a \in A$ such that $2a \in A$.  Then from \eqref{sum} we see that the injective map $x \mapsto x+a$ maps $A$ to $A$, and thus must be a bijection on $A$.  This implies that the map $x \mapsto x-a$ is also a bijection on $A$.  Combining these facts with \eqref{sum}, we see that $A$ is closed under addition (since we can shift $b_1, b_2, b_1+b_2$ back and forth by $a$ as necessary to make $b_1, b_2$ distinct).  Since $A$ is finite, every element must have finite order, and then $A$ is closed under negation, and so $A$ is a subgroup.

It remains to consider the case when 
\begin{equation}\label{2aa}
2a \not \in A \hbox{ for every non-zero } a \in A.
\end{equation}
Suppose now that there exists an element $b \in A$ such that $2b \neq 0$.  We claim that $A$ must then lie in the group generated by $b$.  For if this were not the case, then take an element $a \in A$ which is not generated by $b$, in particular $a \neq 0$.  By iterating \eqref{sum} we see that $a+kb \in A$ for all positive $k$, thus $b$ must have finite order.  In particular, $a+b, a-b \in A$, and by \eqref{sum} again (and the hypothesis $2b \neq 0$) we see that $2a \in A$, contradicting \eqref{2aa}.  Once $A$ lies in the group generated by $b$, it is not hard to see that $A$ must be one of $\{b\}$, $\{b,0\}$, or $\{b,0,-b\}$, simply by using the observation from \eqref{sum} that the map $x \mapsto x+b$ maps $A \backslash \{b\}$ into $A$, together with \eqref{2aa}.

The only remaining case is the $2$-torsion case when $2b=0$ for all $b \in A$.  Then either $A = \{0\}$, or else by \eqref{2aa} $A$ does not contain zero.  In the latter case we observe from \eqref{sum} that $A \cup \{0\}$ is closed under addition and is thus a $2$-torsion group.  The claim follows.
\end{proof}

The problem we would like to pursue  here is to obtain a similar classification for sets  $A$ with $\phi (A)= k$, for any given $k$.  The main theorem is the content of the next section.

\section{Structure of a set $A$ with $\phi (A) \le k$} 

For a given $k$, we can guarantee $\phi (A) \le k$ by taking $A$ to be the union of $k$ subgroups. 
More generally, we can take $A$ be the union of $k- l$ subgroups and a set of $l$ elements, for any $ 0 \le l \le k$. 

The main  new result we would like to announce is a 
 partial converse to this observation. Roughly speaking, we prove that if $\phi (A) \le k$, then $A$ is the union of $k-l$ dense subsets of subgroups and a set of bounded  size.  

\begin{theorem}[Small $\phi$ implies covering by groups]\label{main} Let $A$ be a finite subset of an additive group 
$G$ with $\phi(A) \leq k$ for some $k \geq 1$.  Then there exist finite subgroups $H_1,\ldots,H_{m}$ of $G$ with $0 \leq m \leq k$
 such that 
\begin{equation}\label{amm}
|A \backslash (H_1 \cup \ldots \cup H_{m})| \leq C(k)
\end{equation}
and 
\begin{equation}\label{amm-2}
|A \cap H_i| \geq |H_i|/C(k)
\end{equation}
for all $1 \leq i \leq m$.  Here $C(k) > 0$ is a quantity  depending only on $k$ (in particular, it does not depend on $G$ or $|A|$).  If furthermore $m=k$, we may strengthen \eqref{amm} to
$$ A \subseteq H_1 \cup \ldots \cup H_k.$$
\end{theorem}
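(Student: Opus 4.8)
The plan is to prove a quantitative structure theorem: if $\phi(A) \le k$ then $A$ is nearly covered by at most $k$ finite subgroups. I would argue by induction on $k$, exploiting the base case $k=1$ already handled by Proposition \ref{easy}. The first and central idea is to extract a single dense subgroup from $A$ whenever $A$ is large. The condition $\phi(A) \le k$ says that among any $k+1$ elements of $A$, some two distinct ones $b_i, b_j$ have $b_i + b_j \in A$; more usefully, it forbids the existence of a $(k+1)$-element set $B \subseteq A$ with $(B \overset{\ast}{+} B) \cap A = \emptyset$. The strategy is to build a graph (or hypergraph) on $A$ recording which sums land in $A$, and to feed this into additive-combinatorial machinery.

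Concretely, first I would pass to a large subset $A'$ of $A$ on which the ``additive energy'' or the number of Schur-type triples $(a,b,a+b)$ with all three in $A$ is large: the sum-avoiding constraint forces many pairs $b_i, b_j$ to satisfy $b_i + b_j \in A$, since otherwise a greedy selection would produce a sum-avoiding set of size $k+1$. Quantitatively, a Ramsey/Turán argument on the graph $\{(a,b) : a+b \in A\}$ shows that if $|A|$ is larger than some $C(k)$, then $A$ contains an element whose translates hit $A$ very often, i.e. $|A \cap (A - a)|$ is a positive proportion of $|A|$ for many $a$. This high additive energy is the hypothesis of the Balog–Szemerédi–Gowers theorem, which yields a subset $A'' \subseteq A$ of proportional size with small doubling $|A'' + A''| \le C(k) |A''|$. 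Then Freiman's theorem (in the arbitrary abelian group setting, e.g. the Green–Ruzsa version) gives that $A''$ is contained in a coset of a subgroup $H$ with $|H| \le C(k)|A''|$, and combined with the sum-freeness constraint one forces this coset to be $H$ itself and $A'' $ to occupy a definite fraction of $H$ — giving one of the desired subgroups $H_i$ satisfying \eqref{amm-2}.

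The inductive step then removes $H_i$: I would like to claim that $\phi(A \setminus H_i) \le k-1$, so that the inductive hypothesis supplies the remaining $m-1 \le k-1$ subgroups and the bounded remainder. This is where the main obstacle lies. Deleting a dense subgroup does not cleanly drop $\phi$ by one, because a sum-avoiding set for $A \setminus H_i$ together with one extra carefully chosen element of $H_i$ need not be sum-avoiding for $A$ — the interaction between $H_i$ and $A \setminus H_i$ under addition must be controlled. The resolution I anticipate is to show that any sum-avoiding set $B$ in $A \setminus H_i$ can be augmented by a single element of $H_i$ (using that $H_i \cap A$ is dense and that $H_i$ is a subgroup, so $H_i$ contains its own pairwise sums) to produce a sum-avoiding set in $A$ of size $|B|+1$, thereby forcing $\phi(A \setminus H_i) \le \phi(A) - 1 \le k-1$. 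Making this augmentation step robust, and tracking the constants $C(k)$ through the repeated applications of Balog–Szemerédi–Gowers and Freiman (which worsen $C(k)$ at each level), is the technically delicate part.

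Finally, for the strengthened conclusion when $m = k$: here the bounded remainder in \eqref{amm} must actually be empty. I would argue that if $m=k$ subgroups have already been extracted but some element $a \in A$ lies outside all of them, then one can select representatives $b_1 \in H_1 \cap A, \ldots, b_k \in H_k \cap A$ together with $a$, and using the density \eqref{amm-2} and the independence of the subgroups arrange that $\{a, b_1, \ldots, b_k\}$ is sum-avoiding in $A$, contradicting $\phi(A) \le k$. The key point is that with $k$ full subgroups accounted for, there is no ``room'' for a stray element, so the covering must be exact. This last step is essentially a pigeonhole refinement of the augmentation argument and should go through once the main extraction and induction are in place.
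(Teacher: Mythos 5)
Your opening moves (many pairs of $A$ summing back into $A$, then Balog--Szemer\'edi and Green--Ruzsa--Freiman to extract a dense structured piece) match the intuition the paper describes in Section \ref{key}, but your inductive step contains a genuine error, and it is precisely the error the paper spends most of that section warning about. You claim that a sum-avoiding set $B$ in $A \setminus H_i$ can be augmented by one element of $H_i$ to a sum-avoiding set in $A$ of size $|B|+1$, hence $\phi(A \setminus H_i) \le \phi(A)-1$. This fails at an earlier point than the augmentation: $B$ being sum-avoiding in $A \setminus H_i$ only means $(B \overset{\ast}{+} B) \cap (A \setminus H_i) = \emptyset$; the pairwise sums may land inside $H_i \subseteq A$, so $B$ need not be sum-avoiding in $A$ at all, and no augmentation can rescue this. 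The paper's explicit counterexample: take $x \notin H$ with $2x \in H$ and let $A = H \cup S$ where $S$ is an \emph{arbitrary} subset of $x+H$. Then $\phi(A) \le 2$ (all sums of two elements of $S$ lie in $2x+H = H \subseteq A$), yet $\phi(A \setminus H) = \phi(S)$ can be arbitrarily large. So there is no bound of the form $\phi(A \setminus H_i) \le k-1$, and your induction collapses. The actual argument must instead \emph{enlarge} the extracted subgroup to absorb the relevant torsion (here, replace $H$ by $H + \{0,x\}$), and the hard part of the proof is an algorithm for doing this enlargement in the middle of the iteration while maintaining ``transversality'' between components of wildly different sizes; the authors found this unmanageable finitarily and ran it in nonstandard analysis, which is also why no effective $C(k)$ is obtained (contrary to your plan of ``tracking the constants'').

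Two secondary gaps: Green--Ruzsa--Freiman yields containment in a coset \emph{progression} $H+P$, not a coset of a subgroup; eliminating the torsion-free component $P$ requires the lower bounds on $\phi$ for torsion-free sets from Section 1, a step you skip. And in the $m=k$ endgame, choosing $b_i \in H_i \cap A$ with all cross-sums $b_i+b_j$ and $a+b_i$ outside $A$ is not a routine pigeonhole; it again runs into the interaction problem above.
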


Note that Proposition \ref{easy} gives the $k=1$ case of this theorem with $C(1)=3$. The formulation of the theorem was motivated by Freiman type inverse theorems in additive combinatorics. The full proof of this theorem is long and fairly technical and will be presented in a coming paper \cite{TVlongversion}, but we will discuss some key ideas in
Section \ref{key}. Due to the complexity of the argument, we choose to present the proof using non-standard analysis. One byproduct of this is that our arguments currently provide no bound whatsoever on the quantity $C(k)$ appearing in the above theorem; 
we expect that if one were to translate the nonstandard analysis arguments back to a standard finitary setting, that the bound obtained on $C(k)$ would be of Ackermann type in $k$ or worse.

 Theorem \ref{main} does not  describe the structure of $A$ inside each of the component groups $H_i$, other than to establish positive density in the sense of \eqref{amm-2}. However, we should warn the readers that 
 one does not expect as simple a description of the sets $A \cap H_i$ as in Proposition \ref{easy}. For instance, take  $A$ to be  the union of a finite group $H$ and an arbitrary subset of 
 a coset $x+H$ with $2x \in H$, then $\phi (A) \le 2$, but $A$ {\it is not}  the union of 
 two subgroups or the union of one subgroup and a finite set.  On the other hand, in some special cases, we can obtain a stronger statement that pushes the density $ \frac{ | A \cap H_i | }{ |H_i | }$ close to 1; see Theorem \ref{main2} below. 

\section{Erdos's zero-sum problem} 

While discussing $\phi (A)$,  Erd\H{o}s  \cite{Erd65} raised  the following question. 

\begin{question}\label{quest}  Let $k$ be a natural number, let $G$ be a finite additive group, and let $A$ be a subset of $G$ with $\phi(A) < k$.  
Assume that $|A|$ is sufficiently large depending on $k$.  Does there necessarily exist $a_1,a_2 \in A$ such that $a_1+a_2=0$?
\end{question}

It is easy to see that the answer is affirmative when $k=2$ (by Proposition \ref{easy}, for instance). The same answer holds for the case $k=3$, as verified by  Luczak and Schoen \cite{ls} in 1996.
On the other hand,  for every $k \geq 4$, we found a simple counterexample 

\begin{proposition}[Counterexample for  $k \ge 5$]\label{counter}  Let $n \ge 4$ be a natural number, and set $G$ to be the cyclic group $G := \Z/2^n \Z$.  Let $A \subset G$ be the set
$$ A := \{ (4m+1)2^j \hbox{ mod } 2^n: m \in \Z, 0 \leq j \leq n-2 \}.$$
Thus, for instance, if $n=4$, then $A = \{ 1, 2, 4, 5, 9, 10, 13 \hbox{ mod } 16 \}$. 
Then $\phi(A) = 4$ and $|A| = 2^{n-1}-1$, but there does not exist $a_1,a_2 \in A$ with $a_1+a_2=0$.
\end{proposition}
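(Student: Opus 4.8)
The plan is to verify the three assertions of Proposition~\ref{counter} separately: first the arithmetic facts that $|A| = 2^{n-1}-1$ and that $A$ contains no pair summing to zero, then the harder combinatorial fact that $\phi(A) = 4$.

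\textbf{Size and zero-sum-freeness.}
First I would observe that every residue class in $\Z/2^n\Z$ except $0$ has a unique representation of the form $(4m+1)2^j \bmod 2^n$: writing a nonzero element as $u \cdot 2^j$ with $u$ odd and $0 \le j \le n-1$, the odd part $u$ lies in a residue class mod $4$ that is either $1$ or $3$, and $A$ is exactly the set of those with $u \equiv 1 \pmod 4$ together with the constraint $j \le n-2$ (the case $j=n-1$ gives only $2^{n-1}$, whose odd part is $1$, but I will track this boundary case carefully). Counting: for each fixed $j$ with $0 \le j \le n-2$, the elements $(4m+1)2^j$ range over $2^{n-2-j}$ distinct residues, and summing the geometric series gives $2^{n-1}-1$. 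For zero-sum-freeness, if $a_1 + a_2 = 0$ then $a_2 = -a_1$; writing $a_1 = (4m+1)2^j$, its negative is $-(4m+1)2^j = (4m'+3)2^j$ for suitable $m'$ (since $-1 \equiv 3 \pmod 4$), so $-a_1$ has odd part $\equiv 3 \pmod 4$ and hence $-a_1 \notin A$. This is the crux of why the construction works: the congruence $u \equiv 1 \pmod 4$ is not preserved under negation precisely because $-1 \equiv 3 \pmod 4$, which is exactly why $k=4$ and not smaller is achievable here.

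\textbf{Computing $\phi(A)$.}
To show $\phi(A) \le 4$, I would try to cover $A$ by four subgroups of $\Z/2^n\Z$; by Theorem~\ref{main} (or directly, since a union of $k$ subgroups has $\phi \le k$), it suffices to write $A \subseteq H_1 \cup H_2 \cup H_3 \cup H_4$ where each $H_i$ is the cyclic subgroup of index $2$, namely $H = 2\Z/2^n\Z$, translated appropriately --- more precisely I expect to use the subgroups generated by elements of the form $2^j$ and analyze how the dyadic layers $\{(4m+1)2^j\}$ distribute among a small number of subgroups. The cleaner route for the upper bound is to exhibit directly that any $B \subseteq A$ with $|B| = 5$ fails to be sum-avoiding, i.e.\ some $b_1 + b_2 \in A$ with $b_1 \ne b_2$. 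For the matching lower bound $\phi(A) \ge 4$, I would exhibit an explicit sum-avoiding set of size $4$: a natural candidate is $B = \{2^0, 2^1, 2^2, \dots\}$ restricted to four powers of two, say $\{1, 2, 4, 8\}$ (for $n$ large), and check that all six pairwise sums $3, 5, 9, 6, 10, 12$ avoid $A$ --- here the sums of two distinct powers of two have odd part $\equiv 3 \pmod 4$ when the gap is right, so they escape $A$. I would need to confirm such a $B$ exists for every $n \ge 4$ and that no size-$5$ sum-avoiding set exists.

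\textbf{Main obstacle.}
The routine parts are the counting and the zero-sum-freeness. The genuinely delicate step is pinning down $\phi(A)$ exactly, and in particular ruling out a sum-avoiding set of size $5$: this requires a case analysis over the dyadic valuations of the five chosen elements, using the structure that $x + y$ for $x = u2^i$, $y = v2^j$ with $i < j$ has valuation exactly $i$ and odd part $u + v2^{j-i} \equiv u \equiv 1 \pmod 4$, so that $x+y \in A$ automatically whenever $i < j$ and both valuations are small enough. This pigeonhole-type argument on valuations --- showing that among any five elements one finds two whose sum lands back in $A$ --- is where the real content lies, and I expect the boundary constraint $j \le n-2$ (and the need to handle the topmost dyadic layers where sums may wrap around mod $2^n$) to be the fiddliest point to get right.
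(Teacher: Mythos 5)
Your counting of $|A|$ and the zero-sum-freeness argument (negation flips the odd part from $1$ to $3 \bmod 4$) are correct and match the paper. But both halves of the computation of $\phi(A)$ have genuine problems. For the lower bound, your candidate $\{1,2,4,8\}$ is \emph{not} sum-avoiding: $1+4=5=(4\cdot 1+1)2^0 \in A$, and likewise $1+8=9 \in A$ and $2+8=10=5\cdot 2 \in A$. The point you have backwards is that $2^i+2^j$ with $i<j$ has odd part $1+2^{j-i}$, which is $\equiv 3 \pmod 4$ only when $j-i=1$ and $\equiv 1 \pmod 4$ (hence in $A$) when $j-i\geq 2$; so no set of four powers of two can work, since four exponents cannot be pairwise within $1$ of each other. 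The paper instead takes $B=\{1,2,5,10\}$, whose elements use only the valuations $0$ and $1$, and one checks all six sums have odd part $\equiv 3 \pmod 4$.

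The same sign error undermines your upper-bound sketch: you assert that $x+y\in A$ ``automatically whenever $i<j$,'' which is false for $j=i+1$. The correct dichotomy, as in the paper, is: (a) if two of the five valuations differ by at least $2$, the sum of the corresponding elements has odd part $\equiv 1 \pmod 4$ and the smaller valuation, hence lies in $A$; so all five valuations are confined to $\{j_1,j_1+1\}$. (b) In that case a \emph{second} pigeonhole is needed, on the pair (valuation, parity of $m$): two elements $(4m_i+1)2^j,(4m_{i'}+1)2^j$ with $m_i\equiv m_{i'} \pmod 2$ sum to $(4m+1)2^{j+1}$ with $m=(m_i+m_{i'})/2$, which lies in $A$ provided $j+1\leq n-2$; the boundary case $j=n-2$ is excluded because two such elements would both equal $2^{n-2}$. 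This parity pigeonhole, which is the actual content of the bound $\phi(A)\leq 4$, is entirely missing from your proposal. Finally, your fallback of covering $A$ by four subgroups cannot work: $\phi$ is bounded by $k$ when $A$ \emph{equals} a union of $k$ subgroups (so that $b_1+b_2\in H_i\subseteq A$), not when $A$ is merely \emph{contained} in one, and in $\Z/2^n\Z$ the only subgroup containing $1$ is the whole group.
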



\begin{proof}  It is easy to see that if $a \in A$, then $-a \not \in A$, and that
$$ |A| = 2^{n-2} + 2^{n-3} + \dots + 1 = 2^{n-1}-1$$
as claimed, and the set $\{ 1, 2, 5, 10 \hbox{ mod } 2^n \}$ is always sum-avoiding in $A$, so $\phi(A) \geq 4$.  The only remaining thing to establish is the upper bound $\phi(A) \leq 4$.  Suppose for contradiction that there existed distinct $a_1,a_2,a_3,a_4,a_5 \in A$ such that the $\binom{5}{2}$ sums $a_i+a_{i'}$ with $1 \leq i < i' \leq 5$ were all outside $A$.  We can write $a_i = (4m_i+1)2^{j_i} \hbox{ mod } 2^n$ with $j_1 \leq \dots \leq j_5$.  If $j_5 > j_1+1$ then $a_5$ is a multiple of $4 \times 2^{j_1}$, and hence $a_1+a_5 = (4m_1+1)2^{j_1}+a_5$ lies in $A$, a contradiction.  Thus $j_1,\dots,j_5$ lie in $\{j_1,j_1+1\}$.  By the pigeonhole principle, we can then find $1 \leq i < i' \leq 5$ such that $j_i = j_{i'}$ and such that $m_i, m_{i'}$ have the same parity.  Note that $j_i=j_{i'}$ cannot equal $n-2$ since $a_i,a_{i'}$ would then both equal $2^{n-2} \hbox{ mod } 2^n$.  But then $a_i+a_{i'}$ is of the form $(4m+1)2^{j_i+1}$, where $m$ is the average of $m_i$ and $m_{i'}$, and so $a_i+a_{i'} \in A$, again a contradiction.
\end{proof}

\begin{proposition} [Counterexample for $k=4$]\label{k4}  Let $G := (\Z/7\Z) \times H$ for some arbitrary finite group $H$, and let $A := A_0\times H$ where $A_0 := \{ 1 \hbox{ mod } 7, 2 \hbox{ mod } 7, 4 \hbox{ mod } 7 \}$.  Then $\phi(A)=3$ and $|A| = 3|H|$, but there does not exist $a_1,a_2 \in A$ with $a_1+a_2=0$.
\end{proposition}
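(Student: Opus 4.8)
I would verify the three assertions of Proposition \ref{k4} in turn: the non-existence of a zero-sum pair, the lower bound $\phi(A) \geq 3$, and the matching upper bound $\phi(A) \leq 3$. The first two are short, and I expect the upper bound to be the main obstacle, so I would allocate most of the work there.

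\emph{No zero-sum pair.} Since $A = A_0 \times H$, any element of $A$ has the form $(a_0, h)$ with $a_0 \in A_0$, and its negation is $(-a_0, -h)$. Thus a zero-sum pair in $A$ would require $a_0 + a_0' = 0$ in $\Z/7\Z$ with $a_0, a_0' \in A_0 = \{1,2,4\}$. One checks directly that $-1 = 6$, $-2 = 5$, $-4 = 3$ are all outside $A_0$, so $A_0$ contains no two elements summing to zero (here it is convenient that $A_0$ is exactly the set of nonzero quadratic residues mod $7$, and $-1$ is a non-residue mod $7$). Hence no pair in $A$ sums to zero, regardless of the $H$-component.

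\emph{Lower bound $\phi(A) \geq 3$.} I would exhibit a sum-avoiding set of size $3$. The natural candidate is $B = \{(1,h_0),(2,h_0),(4,h_0)\}$ for any fixed $h_0 \in H$ (or one could spread out the $H$-coordinates; the $\Z/7\Z$ coordinate is what does the work). The three distinct pairwise sums in the first coordinate are $1+2=3$, $1+4=5$, $2+4=6$, none of which lies in $A_0$; therefore none of the pairwise sums of $B$ lands in $A$, so $B$ is sum-avoiding and $\phi(A) \geq 3$.

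\emph{Upper bound $\phi(A) \leq 3$.} This is the crux. Suppose for contradiction that there are four distinct elements $b_1,b_2,b_3,b_4 \in A$, say $b_i = (c_i, h_i)$ with $c_i \in A_0$, whose $\binom{4}{2}=6$ pairwise sums all avoid $A$. Avoiding $A = A_0 \times H$ forces, for each pair $i \neq j$, that the first coordinate $c_i + c_j \notin A_0$, i.e.\ $c_i + c_j \in \{0,3,5,6\}$ in $\Z/7\Z$. The key reduction is that this is now a purely finite statement about the multiset $\{c_1,c_2,c_3,c_4\}$ of elements of $A_0 = \{1,2,4\}$: I claim no choice of four (not necessarily distinct) values from $\{1,2,4\}$ can have all six pairwise sums outside $A_0$. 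Since $|A_0|=3$, among $c_1,\dots,c_4$ two must coincide by pigeonhole, say $c_i = c_j$; then $c_i + c_j = 2c_i \in \{2,4,8=1\} = \{1,2,4\} = A_0$, so that pairwise sum lies in $A_0$ — a contradiction. (One must check that the $b_i$ being distinct is consistent with $c_i=c_j$: yes, because the $H$-coordinates can differ, which is exactly why the contradiction comes from the first coordinate alone.) This finishes $\phi(A) \leq 3$, hence $\phi(A) = 3$.

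The only subtlety worth flagging is that distinctness of the $b_i$ in $A$ does not force distinctness of the first coordinates $c_i$, so one cannot argue as if choosing distinct elements of $A_0$; the pigeonhole step must be applied to the first coordinates as a multiset, and the doubling computation $2c \in A_0$ for every $c \in A_0$ (equivalently, $A_0$ is closed under doubling, being the subgroup $\{1,2,4\}$ of $(\Z/7\Z)^{\times}$) is what makes the repeated-coordinate case fatal.
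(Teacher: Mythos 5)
Your proof is correct and follows essentially the same route as the paper: the lower bound via the pairwise sums of $A_0$, and the upper bound by pigeonholing four first coordinates into the three-element set $A_0$ and using that $A_0$ is closed under doubling. The extra care you take about distinctness of the $b_i$ versus distinctness of their first coordinates is a worthwhile clarification but does not change the argument.
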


\begin{proof}  The only non-trivial claim is that $\phi(A)=3$.  From computing sums from the set $A_0 \times \{0\}$ we see that $\phi(A) \geq 3$.  Suppose for contradiction that there existed distinct $a_1,a_2,a_3,a_4 \in A$ with all sums $a_i+a_j$ outside $A$.  By the pigeonhole principle we can find $1 \leq i < j \leq 4$ such that $a_i, a_j \in \{a\} \times H$ for some $a \in A_0$.  Then $a_i+a_j \in \{2a\} \times H$.  Since $2a$ is also in $A_0$, we obtain a contradiction.
\end{proof}

\begin{remark} \label{M}
It is clear that one  can use the construction in Proposition \ref{k4}  for all values $k$ where $2^k-1$ is a prime (Mersenne primes). 
\end{remark}

Thus Erd\H{o}s's question is now resolved for all values of $k$.  However, with Theorem \ref{main} in hand, one still has  the strong feeling that the answer must be {\it morally} affirmative. After all, if $A$ is a dense subset of  a group $H$, then typically 0 (or any element of $H$, for that matter) must be 
represented as the sum of two elements of $A$ a large number of times.  The above counterexamples rely
 heavily on the order of the group $G$ being divisible by a small prime ($2$ and $7$ respectively).  Indeed, in  the opposite case, where the order of $G$ is not divisible by any small primes,  we are able to prove a positive result. 
 As a matter of fact, in this case we have the following strengthening of Theorem \ref{main}.

 \begin{theorem} \label{main2} 
Let  $k$ be a natural number and $\eps$  be a positive constant. Suppose $C_0$ is sufficiently large depending on $k,\eps$.  Let
$A$ be a subset of a finite group $G$ with $\phi(A) < k$, $|A| \geq C_0$, and $|G|$ not divisible by any prime less than $C_0$.  Then there exist finite subgroups $H_1,\dots,H_m$ of $G$ with $0 \leq m < k$ such that $|A \cap H_i| > (1-\eps) |H_i|$ for 
every $i=1,\dots,m$ and  $|A \backslash (H_1 \cup \dots \cup H_m)| \leq C_0$.  If $m = k-1$, we can take $A \backslash (H_1 \cup \dots \cup H_m)$ to be empty.
\end{theorem}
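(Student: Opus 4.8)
The plan is to bootstrap from Theorem~\ref{main}, using the absence of small prime factors in $|G|$ to upgrade the density bound \eqref{amm-2} from $1/C(k)$ all the way to $1-\eps$, and to eliminate the exceptional set in the borderline case $m=k-1$. First I would apply Theorem~\ref{main} with $k$ replaced by $k-1$, which is legitimate because $\phi(A)<k$ is the same as $\phi(A)\le k-1$. This yields finite subgroups $H_1,\dots,H_m$ with $0\le m\le k-1<k$ satisfying $|A\backslash(H_1\cup\dots\cup H_m)|\le C(k-1)$ and $|A\cap H_i|\ge |H_i|/C(k-1)$ for each $i$. Choosing $C_0\ge C(k-1)$ already secures both the count $m<k$ and the exceptional bound $|A\backslash(H_1\cup\dots\cup H_m)|\le C_0$. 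It is convenient to fix a configuration with $m$ as small as possible and, among those, with $\sum_i|H_i|$ minimal, so that no $H_i$ is redundant and the subgroups sit in general position. Since $G$ is finite, each $|H_i|$ divides $|G|$, so every prime factor of $|H_i|$ exceeds $C_0$; in particular each nontrivial $H_i$ has $|H_i|\ge C_0$ and carries nontrivial characters, all of order $\ge C_0$.

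The heart of the matter is to show $|A\cap H_i|>(1-\eps)|H_i|$ for every $i$. Fix $i$, write $B:=A\cap H_i$, and let $\delta:=|B|/|H_i|\in[1/C(k-1),1]$. Because $B\subseteq H_i$, any pairwise sum of elements of $B$ again lies in $H_i$, so such a sum avoids $A$ precisely when it avoids $B=A\cap H_i$; hence a sum-avoiding set for $B$ inside $H_i$ is automatically sum-avoiding for $A$. Consequently it suffices to prove the following claim: if $\delta\le 1-\eps$, then $B$ contains a sum-avoiding set of size $k$. Granting the claim, the hypothesis $\phi(A)<k$ forces $\delta>1-\eps$, which is exactly the density conclusion of the theorem.

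To prove the claim I would look for $k$ elements $s_1,\dots,s_k\in B$ whose $\binom{k}{2}$ pairwise sums all land in the complement $H_i\backslash B$, a set of density at least $\eps$. Here I would run a structure-versus-randomness dichotomy. In the quasirandom regime, where every nontrivial Fourier coefficient of $1_B$ is negligibly small, a standard Fourier count shows that the number of such $k$-tuples is $\bigl(\delta^k(1-\delta)^{\binom{k}{2}}+o(1)\bigr)|H_i|^k$, which is positive because $1-\delta\ge\eps$ and $\delta\ge 1/C(k-1)$; discarding the $O(|H_i|^{k-1})$ degenerate tuples leaves a genuine sum-avoiding set of $k$ distinct elements. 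In the structured regime, $1_B$ correlates with a nontrivial character $\chi$ of order $d\ge C_0$, so $B$ is unevenly distributed across the level sets of $\chi$; one then selects the $s_j$ from the preimage of a short arc in $\Z/d\Z$, chosen so that the images $\chi(s_i+s_j)$ are pushed outside $\chi(B)$ and the sums fall into $H_i\backslash B$. This is exactly the group-theoretic analogue of Erd\H{o}s's dilation argument recalled in Section~1, and it is precisely the largeness of $d$---guaranteed by the absence of small prime factors---that provides the room for the arc construction. When neither alternative closes outright, passing to the densest level set of $\chi$ yields a density increment and the argument iterates.

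It remains to treat the borderline case $m=k-1$, where the theorem asserts $A\subseteq H_1\cup\dots\cup H_{k-1}$. Suppose instead that some $a_0\in A\backslash(H_1\cup\dots\cup H_{k-1})$ survived. Having already established $|A\cap H_i|>(1-\eps)|H_i|$, each slice $A\cap H_i$ is so dense, and each $H_i$ so large, that a short counting argument produces representatives $b_i\in A\cap H_i$ for $i=1,\dots,k-1$ for which every cross-sum $b_i+b_j$ and every sum $a_0+b_i$ lies outside $A$: the cross-sums occupy cosets $H_i+H_j$ that the general-position choice of configuration keeps essentially disjoint from $A$, and the high density lets me dodge the remaining bounded set of obstructions. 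Then $\{a_0,b_1,\dots,b_{k-1}\}$ is a sum-avoiding set of size $k$, contradicting $\phi(A)<k$; hence no such $a_0$ exists. I expect the density dichotomy of the previous paragraph to be the main obstacle. The hard regime is $\delta$ near $1-\eps$, where $H_i\backslash B$ is thin and the naive probabilistic, greedy, and Tur\'an-type bounds on the associated sum-graph all fail, since a random pair of elements of $B$ sums back into $B$ far too often; overcoming this needs both the arc construction and a uniform, $G$-independent control of the Fourier and increment parameters. Securing that uniformity is what the nonstandard framework of \cite{TVlongversion} delivers, at the price of leaving $C_0$ (and $C(k)$) ineffective.
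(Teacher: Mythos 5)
The survey does not actually prove Theorem~\ref{main2}: like Theorem~\ref{main}, its proof is deferred to the long version \cite{TVlongversion}, and Section~\ref{key} only sketches ideas for Theorem~\ref{main}. So your proposal can only be judged on its own terms. Its skeleton is reasonable: applying Theorem~\ref{main} at level $k-1$ does give $m<k$, the exceptional bound, and density $\ge 1/C(k-1)$ in each $H_i$; the observation that a sum-avoiding subset of $B=A\cap H_i$ relative to $B$ is automatically sum-avoiding in $A$ is correct; and the $m=k-1$ endgame is essentially sound (indeed it is largely subsumed by the addendum to Theorem~\ref{main}, which already yields $A\subseteq H_1\cup\dots\cup H_{k-1}$ when $m$ attains the maximal value). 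The entire weight of the theorem therefore rests on your Claim: if $B\subseteq H$ has density $\delta\in[1/C(k-1),\,1-\eps]$ and $|H|$ has no prime factor below $C_0$, then $B$ contains a sum-avoiding $k$-set. This reduction is correct, but your proof of the Claim does not close.

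The gap is in the structured half of your dichotomy. First, ``passing to the densest level set of $\chi$'' is not available: the level sets of a character $\chi$ are cosets of $\ker\chi$, whose index equals the order of $\chi$, which is at least $C_0$ and can equal $|H|$ (e.g.\ $H=\Z/p\Z$), so a level set can be a single point and carries no density increment. One would have to work with arcs, i.e.\ Bohr sets, and relativize the whole counting argument to them --- none of which is set up. Second, and more fundamentally, the pattern you are counting mixes $B$ and its complement at different linear forms: you need $s_i\in B$ but $s_i+s_j\in H\setminus B$. A density increment localizes $B$ to a structured piece where it is \emph{denser}, but says nothing about where the complement $H\setminus B$ lives on the \emph{sumset} of that piece; localizing can make the target set for the sums even harder to hit, so the iteration has no termination mechanism that actually produces the configuration. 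Likewise, the assertion that one can choose a short arc so that ``$\chi(s_i+s_j)$ is pushed outside $\chi(B)$'' presumes that $\chi(B)$ omits an arc, which does not follow from $1_B$ merely correlating with $\chi$. The Claim is very plausibly true (it is essentially the new content of Theorem~\ref{main2} beyond Theorem~\ref{main}), but as written your argument establishes only the quasirandom case, and the authors' own route through the nonstandard iterative machinery of \cite{TVlongversion} suggests the remaining case is exactly where the real difficulty lies.
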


If $\eps = 1/2$ and $|A \cap H_i | \ge (1-\eps) |H_i|$, then it is trivial that $A \cap H_i$ contains two elements $a, a'$ which sum up to zero.  Thus, we obtain 

\begin{corollary}[The case of no small prime divisors]\label{odd}   For any fixed $k$ there is a constant $C(k)$ such that the answer to Erd\H{o}s' question is affirmative for all groups $G$ where 
$|G|$ does not have any prime factor less than $C(k)$. 
\end{corollary}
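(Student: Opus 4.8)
The plan is to derive Corollary \ref{odd} directly from Theorem \ref{main2}, which does all the heavy lifting; the corollary is essentially just an unpacking of what Theorem \ref{main2} says in the regime where $|G|$ has no small prime factors. First I would fix the natural number $k$ from Erd\H{o}s' question and instantiate Theorem \ref{main2} with a convenient choice of $\eps$, namely $\eps = 1/2$ (any $\eps < 1$ would do). This produces a threshold $C_0 = C_0(k,\eps) = C_0(k,1/2)$, and I would set $C(k) := C_0$. The claim is then that for any finite group $G$ whose order has no prime factor below $C(k)$, and any subset $A \subseteq G$ with $\phi(A) < k$ and $|A|$ sufficiently large, there exist $a_1, a_2 \in A$ with $a_1 + a_2 = 0$.

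The core of the argument is the following dichotomy. Applying Theorem \ref{main2} to such an $A$ gives finite subgroups $H_1,\dots,H_m$ with $0 \le m < k$, the density bound $|A \cap H_i| > (1-\eps)|H_i| = \tfrac12 |H_i|$ for each $i$, and the covering bound $|A \setminus (H_1 \cup \dots \cup H_m)| \le C_0$. The key step is to observe that inside any single subgroup $H_i$, the set $A \cap H_i$ occupies more than half of $H_i$. Since $x \mapsto -x$ is an involution of $H_i$, the set $-(A \cap H_i)$ also has size exceeding $\tfrac12 |H_i|$, and so by inclusion--exclusion $A \cap H_i$ and its negation must intersect inside $H_i$. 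Concretely, $|(A \cap H_i) \cap (-(A \cap H_i))| > 0$, which yields an element $a \in A \cap H_i$ with $-a \in A \cap H_i$; setting $a_1 = a$, $a_2 = -a$ gives the desired zero-sum pair (one must only check $a_1, a_2$ are genuinely usable, which they are since the question allows $a_1 = a_2$ when $2a = 0$, and otherwise $a \neq -a$ gives distinct elements, both in $A$).

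The remaining case is $m = 0$, i.e.\ there are \emph{no} subgroups in the decomposition. Here Theorem \ref{main2} degenerates to $|A \setminus \emptyset| = |A| \le C_0$, forcing $|A| \le C_0 = C(k)$. But this contradicts the hypothesis that $|A|$ is sufficiently large depending on $k$: choosing the "sufficiently large" threshold in Erd\H{o}s' question to exceed $C(k)$ rules out $m = 0$ entirely, so at least one subgroup $H_i$ is present and the previous paragraph applies. I would phrase the corollary's hypothesis precisely so that $|A| > C(k)$, making this step immediate.

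I do not expect any genuine obstacle in this deduction, since the corollary is a soft consequence once Theorem \ref{main2} is granted; the analytic difficulty resides entirely in Theorem \ref{main2} itself (and its parent Theorem \ref{main}), whose proof the paper defers to \cite{TVlongversion}. The only points requiring a little care are bookkeeping ones: ensuring the threshold $C(k)$ simultaneously exceeds the prime-factor cutoff $C_0$ and the size cutoff $C_0$ from Theorem \ref{main2} (these coincide in that statement, so a single constant suffices), and confirming that the density bound $(1-\eps)|H_i|$ with $\eps = 1/2$ strictly exceeds $|H_i|/2$ so that the pigeonhole intersection of $A \cap H_i$ with $-(A \cap H_i)$ is forced. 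This is exactly the observation flagged in the sentence immediately preceding the corollary, so the proof can be stated in a single short paragraph.
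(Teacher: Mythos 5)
Your proposal is correct and matches the paper's own (very brief) deduction: the paper derives the corollary from Theorem \ref{main2} with $\eps = 1/2$ via exactly the observation that a subset of $H_i$ of density greater than $1/2$ must meet its own negation, and the $m=0$ case is excluded because $|A|$ is assumed sufficiently large. Your write-up simply makes explicit the bookkeeping that the paper leaves implicit.
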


It is an interesting question to classify those groups where the answer to Erd\H{o}s' question   is positive.  The number $C(k)$
that  comes from Theorem \ref{main2} is very large (see the remark after Theorem \ref{main}). On the other hand, if there are infinitely many Mersenne primes, then Remark \ref{M} shows that $C(k)$ should 
be at least exponential in $k$.

\section{Some ideas behind the proof of Theorem \ref{main}  } \label{key}

  The intuition behind the proof is as follows. If $\phi(A)$ is equal to some small natural number $k$ and $A$ is large, then we expect many pairs $a,a'$ in $A$ to sum to another element in $A$.
    Standard tools in additive combinatorics, such as the Balog-Szemer\'edi theorem \cite{balog} and Freiman's theorem in an arbitrary abelian group \cite{gr-4},
     then should show that $A$ contains a large component that is approximately a \emph{coset progression} $H+P$: the Minkowski sum of a finite group 
     $H$ and a multidimensional arithmetic progression $P$.  Because of bounds in the real case such as those mentioned in the first section that show that $\phi$ becomes large on large torsion-free sets, 
     one expects to be able to eliminate the role of the ``torsion-free'' component $P$ of the coset progression $H+P$, to conclude that $A$ has large intersection with a finite subgroup $H$. 
      In view of the subadditivity $\phi (A \cup B) \le \phi(A) + \phi (B)$ , one heuristically expects $\phi(A)$ to drop from $k$ to $k-1$ after removing $H$ (that is to say, one expects $\phi(A \backslash H) = k-1$), at 
      which point one can conclude by induction on $k$ starting with Proposition \ref{easy} as a base case.  More realistically, one expects to have to replace the conclusion $\phi(A \backslash H) = k-1$
       with some more technical conclusion that is not exactly of the same form as the hypothesis $\phi(A)=k$, which makes a direct induction on $k$ difficult; instead, 
       one should expect to have to perform a $k$-fold iteration argument in which one removes up to $k$ subgroups $H_1,\dots,H_m$ from $A$ in turn until one is left with a 
       small residual set $A \backslash (H_1 \cup \dots \cup H_m)$. 

Unfortunately, when the group $G$ contains a lot of torsion, removing a large subgroup $H$ from $A$ can leave one with a residual set with no good additive structure, 
and in particular with no bounds whatsoever on $\phi(A \backslash H)$.  For instance, suppose that there is an element $x$ of $G \backslash H$ with $2x \in H$, and take $A
$ to be the union of $H$ and an \emph{arbitrary} subset of $x+H$.  Then it is easy to see that $\phi(A)$ is at most $2$, but upon removing the large finite group $H$ from $A$ one is left with an arbitrary subset of $x+H$, and in particular $\phi(A \backslash H)$ can be arbitrarily large.

The problem in this example is that the group $H$ is the ``incorrect'' group to try to remove from $A$; one should instead remove the larger group $H' := H + \{0,x\}$, which contains $H$ as an index two subgroup.  The main difficulty in the argument is then to find an algorithm to enlarge an ``incorrect'' group $H$ to a ``correct'' group that absorbs all the relevant ``torsion'' that is present.  This is not too difficult at the start of the iterative argument mentioned above, but becomes remarkably complicated in the middle of the iteration when one has already removed some number of large subgroups $H_1,\dots,H_{m'}$ from the initial set $A$.  A particular technical difficulty comes from the fact that the groups $H_1,\dots,H_{m'}$, as well as the residual set $A \backslash (H_1 \cup \dots \cup H_{m'})$, can have wildly different sizes; in particular, sets which are negligible when compared against one of the $H_i$, could be extremely large when compared against the residual set $A \backslash (H_1 \cup \dots \cup H_{m'})$.  To get around these issues, one needs to ensure some ``transversality'' between these components of $A$, in the sense that the intersection between any of these two sets (or translates thereof) are much smaller than either of the two sets.  This adds an extra layer of complexity to the iterative argument; so much so, in fact, that it becomes very unwieldy to run the argument in a purely finitary fashion.  Instead, we were forced to formulate the argument in the language of nonstandard analysis\footnote{For some prior uses of nonstandard analysis in additive combinatorics, see e.g. \cite{jin} or \cite{gtz}.} in order to avoid a large number of iterative arguments to manage a large hierarchy of parameters (somewhat comparable in complexity to those used to prove the hypergraph regularity lemma, see e.g. \cite{gowers-hypergraph}, \cite{nagle-rodl-schacht}, \cite{nagle-rodl-schacht1}, \cite{tao:hyper}).  As mentioned before,  a byproduct  of this is that our arguments currently provide no bound whatsoever on the quantity $C(k)$ appearing in the above theorem; indeed we expect that if one were to translate the nonstandard analysis arguments back to a standard finitary setting, that the bound obtained on $C(k)$ would be of Ackermann type in $k$ or worse.

\vskip2mm

{\it Ackowledgement.} We would like to thank B. Green for pointing out several references.

\end{document}